\numberwithin{equation}{section}
\newtheorem{theorem}{Theorem}
\newtheorem{corollary}[theorem]{Corollary}
\newtheorem{lemma}[theorem]{Lemma}
\newtheorem*{lemma*}{Lemma}
\newtheorem{proposition}[theorem]{Proposition}
\theoremstyle{remark}
\newtheorem*{remark*}{Remark}
\renewcommand{\bar}{\overline}
\newcommand{\Z}{\mathbb{Z}}
\newcommand{\Q}{\mathbb{Q}}
\newcommand{\SL}{\textup{SL}}
\newcommand{\GL}{\textup{GL}}
\newcommand{\pbar}{\overline{p}}
\DeclareMathOperator{\spt}{spt}
\DeclareMathOperator{\m}{m2}
\DeclareMathOperator{\mspt}{M2spt}
\DeclareMathOperator{\sptbar}{\bar{spt1}}
\newcommand{\hol}{\mathrm{hol}}
\newcommand{\ep}{\varepsilon}
\renewcommand{\pmatrix}[4]{\left( \begin{smallmatrix} #1 & #2 \\ #3 & #4 \end{smallmatrix} \right)}
\newcommand{\pMatrix}[4]{\left( \begin{matrix} #1 & #2 \\ #3 & #4 \end{matrix} \right)}
\newcommand{\pfrac}[2]{\left(\frac{#1}{#2}\right)}
\newcommand{\tleg}[2] {\left(\tfrac{#1}{#2}\right)}
\renewcommand{\)}{\right)}
\dedicatory{In  memory of Basil Gordon}
\date{}
\author{Scott Ahlgren}
\address{Department of Mathematics\\
University of Illinois\\
Urbana, IL 61801} 
\email{sahlgren@illinois.edu}
\author{Nickolas Andersen}
\address{Department of Mathematics\\
University of Illinois\\
Urbana, IL 61801} 
\email{nandrsn4@illinois.edu}
\title{Euler-like recurrences for smallest parts functions}
\subjclass[2010]{11F37, 11P84}
\begin{document}
\begin{abstract}  We obtain recurrences for smallest parts functions which resemble Euler's
recurrence for the ordinary partition function.  The proofs involve the holomorphic projection of non-holomorphic modular
forms of weight $2$.
 \end{abstract}
 \thanks{The first  author was supported by a grant from the Simons Foundation (\#208525 to Scott Ahlgren).}
 \keywords{Smallest parts functions, holomorphic projection}
\maketitle

\section{Introduction}

Let $p(n)$ denote the unrestricted partition function. One of the fundamental results in partition theory is Euler's recurrence, which states that for $n>0$ we have
\begin{equation}
	\sum_{k}(-1)^k p \left(n-\tfrac{k(3k+1)}{2}\right)=0.
	\label{eulerrec}
\end{equation}
The {\it smallest parts} function $\spt(n)$, which counts the number of smallest parts in the partitions of $n$, was introduced by Andrews \cite{Andrews:2008}. 
This and other smallest parts functions have been studied widely in recent years from a number of perspectives (see, e.g.  \cite{Ahlgren:2011, Ahlgren:2012,  Andrews:2012, BLO:Rank, BLO:Automorphic, Folsom:2008a, Garvan:2012} and the many references therein). 
Many of the  beautiful properties of these functions originate from  the fact that the associated generating functions are components of  mock modular forms of weight $3/2$.
  
Here we use the technique of holomorphic projection (as described by Sturm \cite{Sturm:1980} and Gross-Zagier \cite{GZ:Heegner}) to derive analogues of \eqref{eulerrec} for smallest parts functions.
The basic principle (also used recently in \cite{Andrews:2013} and \cite{Imamoglu:2013}) is that for a non-holomorphic modular form $f=f^++f^-$ written as a sum of holomorphic and non-holomorphic parts, we have $\pi_{\rm hol}(f)=f^++\pi_{\rm hol}(f^-)$.
If one can identify the holomorphic modular form $\pi_{\rm hol}(f)$ and can compute $\pi_{\rm hol}(f^-)$ explicitly, then a formula for $f^+$ results.
The simplest such analogue involves $\spt(n)$.  The associated holomorphic projection has been described (without proof) by Zagier \cite[\S 6]{Zagier:2009}; for completeness we give a brief account here.
 
Let
\begin{equation*}
	\eta(z):=q^\frac1{24}\prod_{n=1}^\infty(1-q^n)
\end{equation*}
denote Dedekind's eta function and let $E_2(z)$ be the quasimodular weight $2$ Eisenstein series on $\SL_2(\Z)$.
Define
\begin{equation*}
	F(z):=\sum_{n=1}^\infty\spt(n)q^{n-\frac1{24}}-\frac1{12}\cdot\frac{E_2(z)}{\eta(z)}
	+\frac{\sqrt{3i}}{2 \pi}\int_{- \overline{z}}^{i \infty} \frac{\eta(\tau)}{\left(z+\tau \right)^{\frac{3}{2}}}\, d\tau.
\end{equation*} 
Let $\varepsilon$ be the multiplier on $\SL_2(\Z)$ associated to the eta function.   
It can be shown (see \cite{Bringmann:2008} or \cite[\S 3]{Ahlgren:2011}) that $F(z)$ is  a weak harmonic Maass form of weight $3/2$ on $\SL_2(\Z)$ with multiplier $\overline\varepsilon$, so the function $\eta(z)F(z)$ transforms like a modular form of weight $2$ on $\SL_2(\Z)$. 
For positive integers $n$, define 
\begin{equation*} 
	a(n):=-\sum_{\substack{ab=6n \\ 0<a<b}}\tleg{12}{b^2-a^2}\cdot a.
\end{equation*}
We have
\[
	\sum_{n=1}^\infty a(n)q^n=q+2 q^2+q^3+2 q^4-q^5+3 q^6-2 q^7+2 q^8+q^9+q^{10}+\dots.
\]

Letting $E_2^*(z)$ denote the non-holomorphic Eisenstein series on $\SL_2(\Z)$, it can be shown that the holomorphic projection of  $\eta(z)F(z)+\frac1{12}E_2^*(z)$ is equal to $0$.
By computing this projection directly (using an argument similar to those given below) one can deduce that
\begin{equation*}
	\prod_{n=1}^\infty (1-q^n)\cdot \sum_{n=1}^\infty \spt(n)q^n=\sum_{n=1}^\infty a(n)q^n.
\end{equation*}

In other words, we have the following Euler-like recurrence  for $\spt(n)$, which is recorded in a slightly different form by Zagier \cite{Zagier:2009} and Andrews-Rhoades-Zwegers \cite[Thm. 11.1]{Andrews:2013}.
\begin{theorem}\label{eulersptthm} 
For $n>0$ we have
\[
	\sum_{k}(-1)^k \spt\(n-\tfrac{k(3k+1)}{2}\)=a(n).
\]
\end{theorem}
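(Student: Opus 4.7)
The plan is to apply holomorphic projection to the modular completion
\[
G(z) := \eta(z) F(z) + \tfrac{1}{12} E_2^*(z),
\]
and exploit the fact that $\pi_{\hol}(G) = 0$. Since $F$ has weight $3/2$ with multiplier $\bar\varepsilon$ and $\eta$ has weight $1/2$ with multiplier $\varepsilon$, the product $\eta F$ transforms as a weight-$2$ form on $\SL_2(\Z)$ with trivial multiplier; adding the real-analytic Eisenstein completion $E_2^*$ makes $G$ a real-analytic modular form of weight $2$ on the full modular group. Because the space $M_2(\SL_2(\Z))$ is trivial, the holomorphic projection must vanish.

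Writing $E_2^*(z) = E_2(z) - 3/(\pi y)$, one sees that the quasimodular term $-\tfrac{1}{12} E_2/\eta$ in $F$ cancels after multiplication by $\eta$ with the $\tfrac{1}{12} E_2$ piece of $\tfrac{1}{12} E_2^*$, leaving
\[
G(z) = \eta(z) \sum_{n \geq 1} \spt(n) q^{n-1/24} + \frac{\sqrt{3i}}{2\pi}\, \eta(z) \int_{-\bar z}^{i\infty} \frac{\eta(\tau)}{(z+\tau)^{3/2}}\, d\tau - \frac{1}{4\pi y}.
\]
The holomorphic part simplifies to $G^+(z) = \prod_{n \geq 1}(1-q^n) \sum_{n \geq 1} \spt(n) q^n$, and by Euler's pentagonal number theorem this equals $\sum_{n \geq 1} b(n) q^n$ with $b(n) := \sum_k (-1)^k \spt(n - k(3k+1)/2)$---the left-hand side of the theorem. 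The constant term $-1/(4\pi y)$ involves only the zeroth Fourier mode in $x$, so it does not contribute to the $n$-th coefficient of $\pi_{\hol}(G)$ for $n > 0$.

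The substance of the proof is the direct computation of $\pi_{\hol}$ of the Eichler-integral term using Sturm's formula, whose $n$-th Fourier coefficient for $n>0$ is
\[
4\pi n \int_0^\infty c_n(y) e^{-4\pi n y}\, dy,
\]
where $c_n(y)$ denotes the $n$-th Fourier coefficient in $x$ of the integrand at $z = x+iy$. I would insert the theta expansion $\eta(\tau) = \sum_{m \geq 1} \leg{12}{m} q^{m^2/24}$ into both the outer $\eta(z)$ and the integrand, use the substitution $\tau = -\bar z + it$ to turn the inner integral into an incomplete-Gamma-type integral in $t$, and then swap the outer $y$-integral with the theta sums, reducing to a beta-function evaluation. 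The Fourier mode $e^{2\pi i n x}$ forces the quadratic constraint $m_2^2 - m_1^2 = 24n$ on the theta indices; after the reparametrization $m_1 = b-a$, $m_2 = b+a$ this becomes $ab = 6n$, with $\leg{12}{m_1}\leg{12}{m_2} = \leg{12}{b^2 - a^2}$ by multiplicativity, and the linear factor $a$ emerges from the $y$-integral.

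The main obstacle is keeping this bookkeeping clean: matching the normalizations (the prefactor $\sqrt{3i}/(2\pi)$, the $4\pi n$ from Sturm, and the signs of the character), and ensuring that the asymmetric constraint $0 < a < b$ in the definition of $a(n)$ arises naturally from the a priori symmetric theta sum (presumably by collecting terms $(m_1,m_2)$ and $(m_2,m_1)$ with opposite contributions). Once one verifies $[\pi_{\hol}(G^-)]_n = -a(n)$, the identity $0 = [\pi_{\hol}(G)]_n = b(n) - a(n)$ proves the theorem.
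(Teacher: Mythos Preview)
Your proposal is correct and follows essentially the same route the paper sketches: form $\eta F + \tfrac{1}{12}E_2^*$, use that $S_2(\SL_2(\Z))=0$ to conclude its holomorphic projection vanishes, and compute the projection of the non-holomorphic piece via the incomplete-gamma expansion and the integral in Lemma~\ref{lem:gamma-integral}, exactly parallel to Sections~3--4. One small correction: the constraint $0<a<b$ is not obtained by symmetrizing but is forced directly, since the Eichler-integral factor contributes $q^{-m_1^2/24}$ while the outer $\eta$ contributes $q^{m_2^2/24}$, so $n>0$ already gives $m_2>m_1$ and hence $a=(m_2-m_1)/2 < b=(m_2+m_1)/2$.
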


We will derive similar recurrences for other smallest parts functions. 
An {\it overpartition} is a partition in which the first occurrence of each part may be overlined.
Let $\overline p(n)$ denote the number of overpartitions of $n$ and let $\sptbar(n)$ denote the number of odd smallest parts in the overpartitions of $n$ (see \cite{BLO:Rank}).
Define a divisor function $s(n)$ by
\[
	s(n) := \sum_{d|n} \min\left(d,\frac nd\right),
\]
with the convention that $s(n)=0$ if $n\notin \Z$.
Define
\[
	b(n) := (-1)^{n+1}
	\begin{cases}
		2s(n) &\text{ if $n$ is odd},\\
		4s(n/4) &\text{ if } n\equiv 0 \pmod{4}, \\
		0 & \text{ if } n\equiv 2 \pmod{4}.
	\end{cases}
\]
Then we have the following analogue of \eqref{eulerrec} for $\sptbar(n)$.

\begin{theorem} \label{thm:spt1bar}
	For   $n>0$ we have
	\[
		\sum_k (-1)^k \sptbar(n-k^2) = b(n).
	\]
\end{theorem}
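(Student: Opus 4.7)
The plan is to follow the template outlined in the introduction for $\spt(n)$, replacing Dedekind's eta function by the theta function
\[
	\theta(z) := \sum_{k \in \Z}(-1)^k q^{k^2} = 1 - 2q + 2q^4 - 2q^9 + \cdots,
\]
a weight $1/2$ modular form on $\Gamma_0(4)$. Since the $n$-th Fourier coefficient of $\theta(z) \cdot \sum_{n \geq 0} \sptbar(n) q^n$ is precisely $\sum_k (-1)^k \sptbar(n-k^2)$, the theorem is equivalent to identifying this product as $\sum_n b(n) q^n$. The strategy is to realize this generating-function identity as the vanishing of the holomorphic projection of an explicit non-holomorphic modular form of weight $2$.

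First, I would invoke the construction (see \cite{BLO:Rank}) of a weak harmonic Maass form $\bar F(z)$ of weight $3/2$ on an appropriate congruence subgroup whose holomorphic part is the generating function $\sum \sptbar(n) q^n$ (up to rational $q$-shifts and a lower-order correction analogous to the $E_2/\eta$ term in the $\spt$ case), and whose shadow is a weight $1/2$ theta series. The non-holomorphic part then has the shape
\[
	\bar F^-(z) = c \int_{-\bar z}^{i\infty} \frac{g(\tau)}{(z+\tau)^{3/2}} \, d\tau
\]
for an explicit unary theta series $g$ and a constant $c$. The product $\theta(z)\bar F(z)$ transforms as a non-holomorphic modular form of weight $2$ with trivial multiplier on a common congruence subgroup.

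Second, I would add to $\theta(z)\bar F(z)$ a suitable linear combination $\sum_{d} \alpha_d E_2^*(dz)$ of non-holomorphic Eisenstein series, chosen so that the resulting form $G(z)$ satisfies $\pi_{\hol}(G) = 0$ (or equals an identifiable holomorphic cusp form). The coefficients $\alpha_d$ are pinned down by the behavior of $\theta \bar F$ at the various cusps. Then by the fundamental identity $\pi_{\hol}(G) = G^+ + \pi_{\hol}(G^-)$, an explicit formula for $G^+$ follows from computing $\pi_{\hol}(G^-)$ term by term via Sturm's formula. The product $\theta(z)g(\tau)$ is a double theta sum; interchanging summation and integration reduces each term to a beta-function integral (essentially the same one implicit in the $\spt$ derivation) evaluating to something of the form $\tfrac{\text{const}}{n} \min(d, n/d) \chi(d)$ for a quadratic character $\chi$. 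Summing these contributions and accounting for the Eisenstein correction should produce $\sum_n b(n) q^n$.

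The main obstacle is the final identification of the resulting divisor sum with $b(n)$: matching the sign $(-1)^{n+1}$, the factor-of-$2$ versus factor-of-$4$ distinction between the odd and $n \equiv 0 \pmod 4$ cases, and especially the vanishing when $n \equiv 2 \pmod 4$. The mod-$4$ vanishing should arise from a cancellation between the even-$k$ and odd-$k$ contributions in $\theta(z)$, and the case split should reflect how $n$ decomposes as $k^2 + m$ under the relevant characters modulo $8$. These calculations are mechanical but demand careful bookkeeping of indices and character values.
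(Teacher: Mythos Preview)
Your proposal is correct and follows essentially the same approach as the paper: the paper takes the harmonic Maass form $G(z)$ of Bringmann--Lovejoy--Osburn (your $\bar F$), multiplies by $\eta^2(z)/\eta(2z)=\theta(z)$ to obtain a weight~$2$ form on $\Gamma_0(2)$, adds the explicit Eisenstein correction $\tfrac16 E(z)+\tfrac1{12}E_2^*(z)$, observes that the holomorphic projection vanishes since $S_2(\Gamma_0(2))=0$, and then evaluates the projection of the non-holomorphic part term by term via an incomplete-gamma integral to recover $b(n)$. The only refinements over your sketch are that the relevant level is $\Gamma_0(2)$ rather than $\Gamma_0(4)$, and that decay at the second cusp is verified via the Fricke involution and the companion form $H$.
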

Theorem \ref{thm:spt1bar} is equivalent to the identity
\[
	\sum_{n\in \Z} (-1)^n q^{n^2}\,  \sum_{m=1}^\infty \sptbar(m) q^m = \sum_{n=0}^\infty b(n) q^n = 2q + 4q^3 - 4q^4 + 4q^5 + 4q^7 - 8q^8 + \ldots.
\]
Since we have 
\[
	\sum_{n=0}^\infty \pbar(n) q^n = \left( \sum_{n\in \Z} (-1)^n q^{n^2} \right)^{-1} = 
	1+2 q+4 q^2+8 q^3+14 q^4+24 q^5+40 q^6+\ldots,
\]
we obtain the following

\begin{corollary} For all $N>0$ we have
\[
	\sptbar(N) = \sum_{n+m=N} \pbar(n) b(m).
\]
\end{corollary}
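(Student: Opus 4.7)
The plan is to deduce the corollary as a direct formal consequence of Theorem \ref{thm:spt1bar} together with the generating function identity for $\pbar(n)$ given in the excerpt. The argument is essentially one of multiplying power series, so I do not anticipate a genuine obstacle; the only thing to be careful about is bookkeeping around the constant terms and the range of summation.

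First I would rewrite Theorem \ref{thm:spt1bar} as the generating function identity displayed immediately after its statement, namely
\[
	\left(\sum_{n\in \Z} (-1)^n q^{n^2}\right)\sum_{m=1}^\infty \sptbar(m)\,q^m \;=\; \sum_{n=0}^\infty b(n)\,q^n,
\]
where I should first check that $b(0)=0$ (so that the right-hand side has no constant term, matching the left), and that indeed the convolution $\sum_{k}(-1)^k\sptbar(N-k^2)$ on the left picks out the coefficient of $q^N$ correctly with the convention $\sptbar(m)=0$ for $m\le 0$.

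Next I would multiply both sides of this identity by the generating function
\[
	\sum_{n=0}^\infty \pbar(n)\,q^n = \left(\sum_{n\in \Z} (-1)^n q^{n^2}\right)^{-1},
\]
which by the reciprocal relation recorded in the excerpt collapses the theta factor on the left. This yields
\[
	\sum_{m=1}^\infty \sptbar(m)\,q^m \;=\; \left(\sum_{n=0}^\infty \pbar(n)\,q^n\right)\left(\sum_{m=0}^\infty b(m)\,q^m\right).
\]

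Finally I would extract the coefficient of $q^N$ from both sides. The right-hand side gives $\sum_{n+m=N}\pbar(n)\,b(m)$, while the left-hand side contributes $\sptbar(N)$ for $N>0$, producing precisely the stated corollary. The main (and really only) care needed is to confirm that the theta series is invertible as a formal power series, which is immediate since its constant term is $1$, and that all sums are over nonnegative indices so no spurious terms appear.
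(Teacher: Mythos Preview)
Your proposal is correct and matches the paper's own reasoning: the corollary is stated immediately after recasting Theorem~\ref{thm:spt1bar} as the generating function identity and noting that $\sum \pbar(n)q^n$ is the formal inverse of $\sum (-1)^n q^{n^2}$, so multiplying through and comparing coefficients is exactly what is intended.
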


Following \cite{BLO:Automorphic}, let $\m(n)$ denote the number of partitions of $n$ without repeated odd parts, and define $\mspt(n)$ as  the restriction of $\spt(n)$ to these partitions whose smallest part is even. Define
\[
	c(n) := \sigma (n) - \sigma (n/2) - \frac 12 s(2n) + s(n/2),
\] 
where $\sigma(n)$ denotes the usual sum of divisors function.

\begin{theorem} \label{thm:m2spt}
	For $n>0$ we have
	\[
		\sum_{k\geq 0} (-1)^{k(k+1)/2} \mspt\left( n-\tfrac{k(k+1)}{2} \right) = (-1)^n c(n).
	\]
\end{theorem}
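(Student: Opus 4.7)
The plan is to follow the strategy sketched above for $\spt(n)$: realize the generating function of $\mspt(n)$ as the holomorphic part of a weight-$3/2$ harmonic Maass form, multiply by an appropriate weight-$1/2$ theta series to land in weight $2$, add an Eisenstein/quasimodular completion so that the holomorphic projection of the whole object vanishes, and then match Fourier coefficients.

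First I would identify the relevant theta series on the left-hand side. Splitting $\sum_{k\geq 0}(-1)^{k(k+1)/2}q^{k(k+1)/2}$ by the parity of $k$ and reindexing the odd terms $k=2m+1\mapsto -(m+1)$, one finds
\[
	\sum_{k\geq 0}(-1)^{k(k+1)/2}q^{k(k+1)/2}=\sum_{n\in\Z}(-1)^{n}q^{2n^{2}+n}=q^{-1/8}\cdot\frac{\eta(z)\eta(4z)}{\eta(2z)},
\]
using the Jacobi triple product (with $z\mapsto -q$, $q\mapsto q^{2}$). Call the resulting weight-$1/2$ theta series $\theta(z)$; it transforms on a congruence subgroup of level dividing $16$ with a computable multiplier. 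Theorem \ref{thm:m2spt} is then equivalent to the statement that, up to a power of $q$, $\theta(z)$ times the generating function of $\mspt(n)$ equals $\sum(-1)^{n}c(n)q^{n-1/8}$.

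Second, following the construction of Bringmann-Lovejoy-Osburn \cite{BLO:Automorphic}, the generating function $\sum\mspt(n)q^{n}$ is the holomorphic part of a weight-$3/2$ harmonic Maass form $\Phi(z)=\Phi^{+}(z)+\Phi^{-}(z)$ on the appropriate subgroup, whose nonholomorphic part $\Phi^{-}(z)$ is a period integral of a weight-$1/2$ unary theta series. The product $\Phi(z)\theta(z)$ then transforms as a nonholomorphic modular form of weight $2$. Paralleling the $\spt$ case, I would add a carefully chosen combination $\mathcal{E}(z)$ of $E_{2}^{*}(z)$ and its images under $U_{d}$ and $V_{d}$ operators so that
\[
	\pi_{\hol}\!\left(\Phi(z)\theta(z)+\mathcal{E}(z)\right)=0.
\]
By the principle $\pi_{\hol}(f)=f^{+}+\pi_{\hol}(f^{-})$, Theorem \ref{thm:m2spt} reduces to the explicit evaluation of $\pi_{\hol}\!\left(\Phi^{-}(z)\theta(z)+\mathcal{E}^{-}(z)\right)$ via the Sturm/Gross-Zagier integral formula.

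Third, applying this formula to the mixed term $\Phi^{-}(z)\theta(z)$ turns it into a Rankin-Selberg-type convolution of two unary theta series against an elementary kernel. The $n$th Fourier coefficient collapses to sums over solutions of $n=2m^{2}+m+j$ with congruence conditions modulo $4$ on $m$ and $j$, which can be evaluated as divisor sums. The $\sigma(n)$ and $\sigma(n/2)$ pieces of $c(n)$ arise from $\mathcal{E}^{-}$, while the $s(2n)$ and $s(n/2)$ pieces arise from the theta-convolution; the overall sign $(-1)^{n}$ traces back to the $(-1)^{n}$ appearing in $\theta(z)$.

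The main obstacle is this last step: pinning down the precise level, multiplier, and coefficients of the completion $\mathcal{E}(z)$, and then organizing the several divisor sums produced by the holomorphic projection so that they assemble into the exact linear combination $\sigma(n)-\sigma(n/2)-\tfrac{1}{2}s(2n)+s(n/2)$. The congruence bookkeeping modulo $4$ is delicate, but should be tractable by the same eta-quotient and twist manipulations employed in \cite{Ahlgren:2011,Ahlgren:2012}. Once this matching is done, extracting the coefficient of $q^{n-1/8}$ from $\Phi^{+}(z)\theta(z)+\mathcal{E}^{+}(z)$ and comparing with $-\pi_{\hol}(\Phi^{-}(z)\theta(z)+\mathcal{E}^{-}(z))$ yields the recurrence.
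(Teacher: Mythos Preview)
Your overall strategy---holomorphic projection of a weight-$2$ completion---is the right one, but your setup diverges from the paper's in a way that makes the execution harder, and you do not carry it through.

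The key difference is where the sign twist lives. You keep $\sum\mspt(n)q^{n}$ untwisted and push the factor $(-1)^{k(k+1)/2}$ into the theta multiplier $\theta(z)=q^{-1/8}\eta(z)\eta(4z)/\eta(2z)$, which forces the level up (you anticipate $16$). The paper instead absorbs the twist into the harmonic Maass form: it uses $H(z)$ from \eqref{eq:def-H}, whose holomorphic part is already $\sum(-1)^{n}\mspt(n)q^{n-1/8}$ (this is exactly $\bar{\mathcal{M}}_{2}(z/8)$ from \cite{BLO:Automorphic}), and multiplies by the \emph{sign-free} theta $\eta^{2}(2z)/\eta(z)=\sum_{k\geq 0}q^{k(k+1)/2+1/8}$. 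The product $h(z)=\tfrac{\eta^{2}(2z)}{\eta(z)}H(z)$ then has weight $2$ on $\Gamma_{0}(2)$. At that level there are only two cusps, and \eqref{eq:GH-fricke} gives $h\big|_{2}W_{2}=\tfrac{1}{2}g$ explicitly, so the growth hypotheses of Lemma~\ref{lem:gz} are immediate and $\pi_{\hol}(\hat{h})=0$ is forced since $S_{2}(\Gamma_{0}(2))=0$. The Eisenstein correction is simply $\hat{h}=h-\tfrac{1}{24}(E-E_{2}^{*})$, and the shadow $\eta^{2}(2\tau)/\eta(\tau)=\sum_{m\text{ odd}}q^{m^{2}/8}$ makes the projection a sum over $n^{2}-m^{2}=8N$ with $n,m$ odd, which factors cleanly as $\tfrac{1}{2}s(2N)-s(N/2)$ with no mod-$4$ casework.

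Your route could in principle be made to work, but as written it has genuine gaps: you must (i) name the specific harmonic Maass form $\Phi$ with holomorphic part $\sum\mspt(n)q^{n}$ and write down its shadow (your convolution description ``$n=2m^{2}+m+j$'' only tracks the theta exponents, not the shadow's); (ii) determine the actual level and verify decay at every cusp, without a single Fricke involution doing the job; (iii) specify $\mathcal{E}(z)$ and confirm that $\pi_{\hol}$ lands in a trivial cusp-form space there; and (iv) actually perform the divisor-sum matching you flag as the ``main obstacle.'' None of this is carried out, so the proposal is a plan rather than a proof; the paper's choice to place the $(-1)^{n}$ on the $\mspt$ side is precisely what collapses all of (i)--(iv) to a short computation on $\Gamma_{0}(2)$.
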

We will prove the theorem by establishing the identity
\[
	\sum_{n=0}^{\infty} q^{n(n+1)/2} \sum_{m=1}^\infty (-1)^m \mspt(m) q^m 
		= \sum_{n=1}^\infty c(n) q^n = q^2 + q^3 + 3q^4 + 3q^5 + 4q^6 + \ldots .
\]
Since 
\[
	\left( \sum_{n=0}^\infty q^{n(n+1)/2} \right)^{-1} = \sum_{n=0}^\infty (-1)^n \m(n)q^n = 1-q+q^2-2 q^3+3 q^4-4 q^5+5 q^6 +\ldots,
\]
we obtain the following
\begin{corollary}
For all $n>0$ we have
\[
	\mspt(N) = \sum_{n+m=N} (-1)^m \m(n) c(m).
\]
\end{corollary}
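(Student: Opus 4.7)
The plan is to mimic the approach outlined for $\spt$: identify a weight $3/2$ weak harmonic Maass form whose holomorphic part encodes $\mspt(n)$, multiply it by a weight $1/2$ theta function so the product transforms like a (non-holomorphic) modular form of weight $2$ on an appropriate congruence subgroup, and then compute the holomorphic projection directly. Note first that the recurrence is equivalent to the generating function identity
\[
	\sum_{n\geq 0} q^{n(n+1)/2}\cdot\sum_{m\geq 1}(-1)^m\mspt(m)\,q^m \;=\; \sum_{n\geq 1} c(n)\,q^n,
\]
so it suffices to prove this power-series identity.

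The series $\sum_{n\geq 0} q^{n(n+1)/2}$ is (up to a fractional power of $q$) an eta-quotient of weight $1/2$ that transforms on $\Gamma_0(2)$ with a theta-type multiplier. In analogy with the $\spt$ story, the generating function of $(-1)^m\mspt(m)$ should be (a scalar multiple of) the holomorphic part of a weight $3/2$ weak harmonic Maass form $f = f^+ + f^-$ on $\Gamma_0(2)$, whose shadow is a weight $1/2$ unary theta series. I would form the product of this weight $1/2$ theta function with the completed Maass form $f$, and correct by an appropriate non-holomorphic Eisenstein-like term (a linear combination of $E_2^*(z)$ and $E_2^*(2z)$, presumably) chosen so that the resulting weight $2$ non-holomorphic form has holomorphic projection equal to $0$; one must also verify that no cusp forms can intrude, i.e.\ that the weight $2$ cuspidal space on the relevant group has no piece that could cancel out in the projection.

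Once that setup is in place, applying $\pi_{\hol}(g) = g^+ + \pi_{\hol}(g^-)$ reduces the problem to an explicit integral computation for $\pi_{\hol}(g^-)$, using the Sturm--Gross-Zagier formula for the holomorphic projection of a non-holomorphic modular form of weight $2$. The integrand is a product of a theta series and the non-holomorphic Eichler integral appearing in $f^-$; after interchanging sum and integral the individual terms reduce to standard incomplete gamma integrals. The resulting Fourier coefficients are divisor sums of the shape $\sum_{d\mid N}\min(d,N/d)$, which is exactly how $s(n)$ enters, while the Eisenstein corrections contribute the $\sigma(n)$ and $\sigma(n/2)$ pieces. Assembling everything should produce exactly $c(n) = \sigma(n)-\sigma(n/2)-\tfrac12 s(2n)+s(n/2)$.

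The main obstacle is this last explicit projection computation and the combinatorial bookkeeping that matches the resulting divisor sums with $c(n)$. One has to track the multiplier system on $\Gamma_0(2)$, decide which Eisenstein series at which cusps (and which scalings $z\mapsto 2z$) produce the $\sigma(n)$ versus $\sigma(n/2)$ contributions, and keep signs straight under the $(-1)^m$ twist of $\mspt$. Confirming that the numerical constants (in particular the $\tfrac12$ in front of $s(2n)$ and the fact that coefficients indexed by $n \equiv 2 \pmod{\text{something}}$ vanish or combine correctly) come out right is where essentially all of the work lies; the rest is a direct translation of the Zagier template reviewed for $\spt$.
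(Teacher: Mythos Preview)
Your outline is essentially the paper's proof of Theorem~\ref{thm:m2spt} (the Euler-like recurrence for $\mspt$), not directly of the Corollary. The generating function identity you write down,
\[
\sum_{n\geq 0} q^{n(n+1)/2}\cdot\sum_{m\geq 1}(-1)^m\mspt(m)\,q^m \;=\; \sum_{n\geq 1} c(n)\,q^n,
\]
is literally a restatement of Theorem~\ref{thm:m2spt}; it is not quite the same as the convolution formula $\mspt(N)=\sum_{n+m=N}(-1)^m\m(n)c(m)$ you are asked to prove. The missing link is the identity
\[
\left(\sum_{n\geq 0} q^{n(n+1)/2}\right)^{-1}=\sum_{n\geq 0}(-1)^n\m(n)\,q^n,
\]
which, once inserted, turns the product identity into the Corollary after one multiplies through by $(-1)^N$. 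In the paper this inversion is the entire content of the Corollary's proof; Theorem~\ref{thm:m2spt} is already in hand.

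That said, your sketch of how to obtain the generating function identity via holomorphic projection matches the paper's argument closely: one forms $h(z)=\dfrac{\eta^2(2z)}{\eta(z)}H(z)$ with $H$ the weight $3/2$ harmonic Maass form carrying $(-1)^n\mspt(n)$, corrects by a combination of $E(z)$ and $E_2^*(z)$ so that $\hat h$ decays at both cusps of $\Gamma_0(2)$, and uses that $S_2(\Gamma_0(2))=0$ to conclude $\pi_{\hol}(\hat h)=0$. The explicit projection of the non-holomorphic piece indeed reduces (via the incomplete-gamma integral) to the divisor sums $\tfrac12 s(2n)-s(n/2)$, and the Eisenstein correction supplies $\sigma(n)-\sigma(n/2)$. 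So your plan for the hard part is correct and coincides with the paper's; just make the final theta-inversion step explicit so that what you prove actually lands on the Corollary rather than on Theorem~\ref{thm:m2spt}.
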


\section{Preliminaries}

Let $k\in \Z$. For matrices $\gamma=\pmatrix abcd \in \GL_2^+(\Q)$ and functions $f$ on the upper half plane we define
\[
	\left(f \big|_k \gamma \right)(z) := \det(\gamma)^{\frac k2} (cz+d)^{-k} f\left(\frac{az+b}{cz+d}\right).
\]
We say that $f$ has weight $k$ for $\Gamma_0(N)$ if $f \big|_k \gamma = f$ for all $\gamma \in \Gamma_0(N)$. 
Let $E_2$ denote the weight $2$ quasi-modular Eisenstein series
\[
	E_2(z) := 1 - 24 \sum_{n=1}^\infty \sigma(n) q^n.
\]
Then the functions
\[
	E_2^*(z) := E_2(z) - \frac{3}{\pi y} \quad \text{ and } \quad E(z) := 2E_2(2z) - E_2(z)
\]
have weight $2$ for $\SL_2(\Z)$ and $\Gamma_0(2)$, respectively. Letting $W_2 := \pmatrix 0{-1}20$ denote the Fricke involution, we have $E \big|_2 W_2 = -E$ and
\[
	\left( E_2^* \big|_2 W_2 \right)(z) = 2E_2^*(2z). 
\]

Define
\begin{equation} \label{eq:def-G}
	G(z) := \sum_{n\geq 1} \sptbar(n)q^n + \frac{1}{12} \frac{\eta(2z)}{\eta^2(z)} (E_2(z) - 4E_2(2z)) + \frac{1}{2\sqrt{2}\pi i} \int_{-\bar{z}}^{i\infty} \frac{\eta^2(\tau)/\eta(2\tau)}{(-i(\tau+z))^\frac{3}{2}} d\tau
\end{equation}
and
\begin{multline} \label{eq:def-H}
	H(z) := \sum_{n\geq 1} (-1)^n \mspt(n) q^{n-\frac18}   \\  + \frac{1}{24} \frac{\eta(z)}{\eta^2(2z)} (E_2(2z) - E_2(z)) + \frac{1}{2\pi i} \int_{-\bar{z}}^{i\infty} \frac{\eta^2(2\tau)/\eta(\tau)}{(-i(z+\tau))^\frac{3}{2}} d\tau.
\end{multline}
By work of Bringmann, Lovejoy, and Osburn \cite{BLO:Automorphic}, these functions are harmonic weak Maass forms of weight $3/2$ (see, for example, \cite{Ono:2009} for details). In the notation of \cite{BLO:Automorphic}, $G(z) = -\frac14 \bar{\mathcal{M}}(z)$ and (correcting a sign error) $H(z) = \bar{\mathcal{M}}_2(z/8)$. From the proof of Lemma~6.1 of \cite{BLO:Automorphic}, we have
\begin{equation} \label{eq:GH-fricke}
(-i\sqrt{2}z)^{-\frac32}\, G(-1/2z)	  = -2^{\frac14} H(z).
\end{equation}

We use this fact to obtain the following proposition.
\begin{proposition}
The functions
\[
	g(z) := \frac{\eta^2(z)}{\eta(2z)} G(z) \quad \text{ and } \quad h(z) := \frac{\eta^2(2z)}{\eta(z)} H(z)
\]
have weight $2$ for $\Gamma_0(2)$.
\end{proposition}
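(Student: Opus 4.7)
The plan is to verify the modularity of $g$ and $h$ on generators of $\Gamma_0(2)$. Using the standard fact that $\Gamma_0(2)$ is generated by $T := \pmatrix 1101$ and $T' := \pmatrix 10{-2}1$, and noting that $T' = W_2 T W_2^{-1}$ where $W_2 := \pmatrix 0{-1}20$ is the Fricke involution, it suffices to establish (i) $g|_2 T = g$ and $h|_2 T = h$, and (ii) a Fricke exchange $g|_2 W_2 = c\, h$ and $h|_2 W_2 = c^{-1} g$ for some nonzero constant $c$. Granted these,
\[
g|_2 T' = (g|_2 W_2)|_2 T|_2 W_2^{-1} = (c h)|_2 W_2^{-1} = c\, h|_2 W_2 = c \cdot c^{-1} g = g,
\]
using $h|_2 T = h$ and the fact that $f|_2 W_2^{-1} = f|_2 W_2$ for all $f$ (since $W_2$ and $W_2^{-1}$ induce the same map on $\H$); analogous reasoning gives $h|_2 T' = h$.

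The $T$-invariance in (i) is immediate from the $q$-expansions. The holomorphic part of $G$ is a power series in integer powers of $q$: both $\sum \sptbar(n) q^n$ and the eta-Eisenstein term $\frac{\eta(2z)}{\eta^2(z)}(E_2(z)-4E_2(2z))$ have integer-exponent expansions, as does $\eta^2(z)/\eta(2z) = \sum_{n\in\Z}(-1)^n q^{n^2}$. The non-holomorphic integral piece of $G$ is manifestly $1$-periodic. Hence $g|_2 T = g$. For $h$, the shift $q^{n-1/8}$ in the $(-1)^n \mspt(n)$ series combines with the initial power $q^{1/8}$ of $\eta^2(2z)/\eta(z)$ to produce integer exponents, yielding $h|_2 T = h$.

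For the Fricke transformation in (ii), I would combine \eqref{eq:GH-fricke} with the standard identity $\eta(-1/z) = \sqrt{-iz}\,\eta(z)$, applied to both $\eta(z)$ and $\eta(2z)$. A direct computation then shows that $\eta^2(z)/\eta(2z)$ and $\eta^2(2z)/\eta(z)$ are Fricke-interchanged (up to an explicit constant) in weight $1/2$; multiplying by the weight-$3/2$ exchange $G \leftrightarrow H$ from \eqref{eq:GH-fricke} yields $g|_2 W_2 = c\, h$ for an explicit $c \in \C^{\times}$. The reciprocal relation $h|_2 W_2 = c^{-1} g$ is then forced by applying $|_2 W_2$ a second time and using that $W_2^2 = -2 I$ acts trivially under the weight-$2$ slash operator, since $(f|_2 (-2 I))(z) = 4 \cdot (-2)^{-2} f(z) = f(z)$.

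The main technical obstacle is the bookkeeping in Step 2: the various fractional and half-integer powers---the branch of $(-i(\tau+z))^{3/2}$ in the definitions \eqref{eq:def-G}--\eqref{eq:def-H}, the branch of $\sqrt{-iz}$ in the $\eta$-transformation, and the factor $(-i\sqrt{2}z)^{-3/2}$ in \eqref{eq:GH-fricke}---must be tracked consistently so that the constant $c$ emerges unambiguously. Once this is settled, (i) and (ii) yield the full $\Gamma_0(2)$-modularity of both $g$ and $h$.
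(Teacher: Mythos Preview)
Your proposal is correct and follows essentially the same strategy as the paper: check the generators $T$ and a lower-triangular matrix by writing the latter as $W_2 T^{\pm 1} W_2^{-1}$, verify $T$-invariance from the $q$-expansions, and deduce the remaining invariance from the Fricke exchange obtained by combining \eqref{eq:GH-fricke} with $\eta(-1/z)=\sqrt{-iz}\,\eta(z)$. The only cosmetic differences are that the paper takes the second generator to be $\pmatrix 1021$ (the inverse of your $T'$) and carries out the Fricke bookkeeping you flag as an obstacle, obtaining the explicit constant $c=2$, i.e.\ $g\big|_2 W_2 = 2h$.
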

\begin{proof}
The group $\Gamma_0(2)/\{\pm I\}$ is generated by the matrices $\pmatrix 1101$ and $\pmatrix 1021$.
By \eqref{eq:def-G} and \eqref{eq:def-H} we have $g(z+1)=g(z)$ and $h(z+1)=h(z)$.
To check the transformation under $\pmatrix 1021$, we write
\begin{equation*}
		\pMatrix 1021 = W_2 \pMatrix 1{-1}01 W_2^{-1}.
\end{equation*}
Using \eqref{eq:GH-fricke} and the fact that $\eta(-1/z)=\sqrt{-iz} \, \eta(z)$, we find that
\begin{equation}\label{eq:gfricke}
	g(z)\big|_2W_2= 2 h(z),
\end{equation}
from which 
\[
	g(z)\big|_2\pMatrix 1021=g(z).
\]
The same is true for $h(z)$, and the proposition follows. 
\end{proof}

We introduce the holomorphic projection operator.
Let $k\geq 2$ be an even integer. Suppose that $\phi(z)$ has weight $k$ for $\Gamma_0(N)$ and has Fourier expansion
\[
	\phi(z) = \sum_{m\in \Z} \alpha(m,y) q^m.
\]
Define
\[
	\pi_\hol(\phi) := \sum_{m=1}^\infty a(m) q^m,
\]
where
\begin{equation} \label{eq:hol-proj}
	a(m) := \frac{(4\pi m)^{k-1}}{(k-2)!} \int_0^\infty \alpha(m,y) e^{-4\pi m y} y^{k-2} \, dy
\end{equation}
(provided that this integral converges).
The next lemma is  Proposition 5.1 of   \cite{GZ:Heegner} if $k>2$.  When $k=2$ it follows from the proof of Proposition  6.2, loc. cit. (note that condition \eqref{eq:coeffbound} ensures that the limit and integral at the bottom of page 296 may be interchanged).

\begin{lemma} \label{lem:gz}
Suppose that $k\geq 2$. Suppose that $\phi(z)$ has weight $k$ for $\Gamma_0(N)$ and satisfies
\[
	(\phi \big|_k \gamma)(z) \ll y^{-\ep} \  \text{ as } y\to \infty
\]
for some $\ep>0$ and for all $\gamma \in \SL_2(\Z)$. 
If $k=2$, suppose in addition that for some $\varepsilon'>0$ we have	
\begin{equation}\label{eq:coeffbound}
	\alpha(m,y) \ll_m y^{-1+\ep'}\quad \text{as} \quad y\to 0 \quad\text{for all} \quad m>0.
\end{equation}
Then $\pi_\hol(\phi)$ is a weight $k$ cusp form on $\Gamma_0(N)$.
\end{lemma}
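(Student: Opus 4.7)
The plan is to realize $\pi_{\hol}(\phi)$ as the unique cusp form in $S_k(\Gamma_0(N))$ having the same Petersson inner product with every holomorphic Poincaré series as $\phi$ does, and to read off its Fourier coefficients from those inner products. Recall that the $m$-th holomorphic Poincaré series
\[
	P_m(z) := \sum_{\gamma \in \Gamma_\infty \backslash \Gamma_0(N)} \bigl(e^{2\pi i m z}\bigr)\big|_k \gamma
\]
is a cusp form of weight $k$ satisfying the reproducing identity $\langle f, P_m \rangle = \frac{(k-2)!}{(4\pi m)^{k-1}} b(m)$ for every $f = \sum b(n) q^n \in S_k(\Gamma_0(N))$; in particular any cusp form is determined by the inner products $\langle f, P_m\rangle$. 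The task therefore reduces to showing that $\langle \phi, P_m\rangle$ makes sense and equals $\frac{(k-2)!}{(4\pi m)^{k-1}}\, a(m)$ with $a(m)$ as in \eqref{eq:hol-proj}.

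For $k > 2$ the series defining $P_m$ converges absolutely, so I would unfold
\[
	\langle \phi, P_m\rangle = \int_{\Gamma_0(N)\backslash \H} \phi(z)\, \overline{P_m(z)}\, y^{k-2}\, dx\, dy
\]
to the strip $[0,1]\times(0,\infty)$. Inserting the Fourier expansion $\phi(z) = \sum_n \alpha(n,y) q^n$ and integrating over $x$ kills every term except $n = m$, producing $\int_0^\infty \alpha(m,y)\, e^{-4\pi m y}\, y^{k-2}\, dy$, which matches the integrand in \eqref{eq:hol-proj}. The hypothesis $(\phi|_k\gamma)(z) \ll y^{-\ep}$ at every cusp is precisely what is needed to justify the unfolding and guarantee absolute convergence; since the resulting Fourier series has no constant term and shares inner products with all $P_m$ against a cusp form, $\pi_\hol(\phi) \in S_k(\Gamma_0(N))$.

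The main obstacle is the case $k = 2$, where the defining series for $P_m$ is only conditionally convergent. Here I would invoke Hecke's trick and work with the regularized Poincaré series
\[
	P_{m,s}(z) := \sum_{\gamma \in \Gamma_\infty \backslash \Gamma_0(N)} \bigl(e^{2\pi i m z}\, (\Im z)^s\bigr)\big|_2 \gamma, \qquad \Re s > 0,
\]
which converges absolutely, unfold to obtain $\langle \phi, P_{m,s}\rangle = \int_0^\infty \alpha(m,y)\, e^{-4\pi m y}\, y^s\, dy$, and analytically continue the identity to $s=0$. The supplementary hypothesis \eqref{eq:coeffbound} enters at exactly this point: it ensures integrability of $\alpha(m,y)\, e^{-4\pi m y}$ near $y = 0$ and thereby lets the limit $s \to 0^+$ be exchanged with the $y$-integral, so the limiting identity reads $\langle \phi, P_m\rangle = \int_0^\infty \alpha(m,y)\, e^{-4\pi m y}\, dy$. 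This identifies $a(m)$ as the $m$-th Fourier coefficient of the desired weight $2$ cusp form on $\Gamma_0(N)$, completing the proof.
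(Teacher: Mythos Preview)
Your proposal is correct and is precisely the Gross--Zagier argument that the paper cites rather than reproves: the paper merely records that the case $k>2$ is Proposition~5.1 of \cite{GZ:Heegner} (the Poincar\'e series unfolding you describe) and that $k=2$ follows from the proof of Proposition~6.2 there, with condition~\eqref{eq:coeffbound} serving exactly the role you identify---justifying the interchange of the limit $s\to 0^{+}$ with the $y$-integral. One small point of phrasing: the cleanest way to conclude that $\pi_{\hol}(\phi)\in S_k(\Gamma_0(N))$ is to \emph{first} invoke Riesz representation (or finite-dimensionality) to produce the unique cusp form $\tilde\phi$ with $\langle\tilde\phi,g\rangle=\langle\phi,g\rangle$ for all $g\in S_k$, and \emph{then} identify its Fourier coefficients with the $a(m)$ via the reproducing property of $P_m$; your later sentence about ``no constant term and shares inner products'' slightly obscures this order of logic, though your opening paragraph states it correctly.
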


\section{Proof of Theorem \ref{thm:spt1bar}}
Write $G=G^+ + G^-$, where
\[
	G^-(z)= \frac{1}{2\sqrt{2}\pi i} \int_{-\bar{z}}^{i\infty} \frac{\eta^2(\tau)/\eta(2\tau)}{(-i(\tau+z))^\frac{3}{2}} d\tau
\]
is the non-holomorphic part. Since $\eta^2(z)/\eta(2z)=1+2\sum_{n=1}^\infty (-1)^nq^{n^2}$, a computation gives
\[
	G^-(z) = \frac{1}{2\pi\sqrt{y}} + \frac{1}{\sqrt{\pi}} \sum_{n=1}^\infty (-1)^n n \beta(n^2 y) q^{-n^2},
\]
where $\beta(y) := \Gamma(-1/2,4\pi y)$ is the incomplete gamma function. Then
\[
	g(z) = \frac{\eta^2(z)}{\eta(2z)} G(z) = \frac{\eta^2(z)}{\eta(2z)} \sum_{n=1}^\infty \sptbar(n) q^n + \frac{1}{12}\left( E_2(z) - 4E_2(2z) \right) + \sum_{N\in \Z} B(N,y) q^N,
\]
where
\begin{equation} \label{eq:B-cases}
	B(N,y) = \frac{(-1)^N}{\sqrt{\pi}}
	\begin{dcases}
		2 \sum_{\substack{n^2-m^2=N \\ m>n\geq 1}} m \beta(m^2 y) + \delta_\square(|N|) \sqrt{|N|}\beta(|N|y)  &\text{ if }N<0,\\
		\frac{1}{2\sqrt{\pi y}} + 2 \sum_{m\geq 1} m\beta(m^2 y) &\text{ if }N=0,\\
		2 \sum_{\substack{n^2-m^2=N \\ n>m\geq 1}} m \beta(m^2 y) + \delta_\square(N) \frac{1}{\sqrt{\pi y}} &\text{ if }N>0.
	\end{dcases}
\end{equation}
Here $\delta_\square(N)=1$ if $N$ is a square, and $0$ otherwise. 
Since  $\beta(y) \sim (4\pi y)^{-3/2} e^{-4\pi y}$ as $y\to \infty$, we have
\[
	\sum_{\substack{n^2-m^2=N \\ n,m\geq 1}} m\beta(m^2 y) 
		 \ll y^{-\frac32}\sum_{\substack{n^2-m^2=N \\ n,m\geq 1}} \frac{1}{m^2}  e^{-4\pi m^2 y},
\]
where the implied constants here and in the rest of the paragraph are absolute.
Since $n^2-(n-1)^2=2n-1$, the equation $n^2-m^2=N$ implies that $n,m\leq (|N|+1)/2$. 
If $N>0$, then this sum is $\ll Ny^{-3/2}$. If $N<0$ then we have $m^2>-N$ for each term in the sum, from which it follows that the sum is $\ll |N|y^{-3/2} e^{4\pi Ny}$.
We conclude that as $y\to \infty$, we have
\begin{equation} \label{eq:bound-B}
	B(N,y) \ll 
	\begin{cases}
		|N|\,y^{-\frac32} \,  e^{4\pi N y} &\text{ if }N<0,\\
		y^{-\frac12}+Ny^{-\frac32} &\text{ if } N\geq 0.
	\end{cases}
\end{equation}

Define
\begin{align*}
	\hat{g}(z) :&= g(z) + \frac 16 E(z) + \frac{1}{12} E_2^*(z) \\
		&= \frac{\eta^2(z)}{\eta(2z)} \sum_{n=1}^\infty \sptbar(n) q^n - \frac{1}{4\pi y} + \sum_{N\in \Z} B(N,y) q^N.
\end{align*}
By \eqref{eq:bound-B} we have $\hat{g}(z) \ll y^{-1/2}$ as $y\to \infty$. From \eqref{eq:gfricke} we obtain
\[
	\hat{g} \big|_2 W_2 
		= 2 h(z) + \frac{1}{6} \left(E_{2}(z) - E_2(2z)\right) - \frac{1}{4\pi y}.
\]
Therefore $\hat g \big|_{2} W_{2} \ll y^{-1}$ as $y\to\infty$ since $h(z)$ decays exponentially at $\infty$.

For $N>0$, we have the bound 
\[
	B(N,y) \ll_N y^{-\frac12} \quad \text{ as } y\to 0
\]
since $\lim\limits_{y\to 0}\beta(y) = -2\sqrt{\pi}$. 
Therefore we may apply Lemma \ref{lem:gz} to obtain
\[
	\pi_\hol(\hat{g}) = 0
\]
since there are no nontrivial cusp forms of weight $2$ on $\Gamma_0(2)$. 

We may also compute $\pi_\hol(\hat{g})$ using \eqref{eq:hol-proj}.
Since $\pi_\hol$ leaves holomorphic functions unchanged, we have
\[
	\pi_\hol(\hat{g}) = \frac{\eta^2(z)}{\eta(2z)} \sum_{n=1}^\infty \sptbar(n) q^n + \pi_\hol\left( - \frac{1}{4\pi y} + \sum_{N\in \Z} B(N,y) q^N \right).
\]
By \eqref{eq:hol-proj} we have
\[
	\pi_\hol\left( - \frac{1}{4\pi y} + \sum_{N\in \Z} B(N,y) q^N \right) = \sum_{N=1}^\infty \left(4\pi N \int_0^\infty B(N,y) e^{-4\pi N y} \, dy\right) q^N.
\]
By \eqref{eq:B-cases}, the coefficient of $q^N$ above is
\begin{equation} \label{eq:B-integral}
	(-1)^N 8\sqrt{\pi} N \sum_{\substack{n^2-m^2=N \\ n,m\geq 1}} m \int_0^\infty \beta(m^2 y) e^{-4\pi N y} \, dy + \delta_\square(N) (-1)^N 4N \int_0^\infty y^{-\frac12} e^{-4\pi N y} \, dy.
\end{equation}
The second integral evaluates to $\frac1{2\sqrt{N}}$ and the first is evaluated using the following lemma. The proof is routine (some care  is required to justify the change in the order of integration).

\begin{lemma}\label{lem:gamma-integral}
	If $A,B>0$ then
	\begin{equation} \label{eq:gamma-int}
		\int_0^\infty \beta(Ay) e^{-4\pi B y} \, dy = \frac{1}{2\sqrt{\pi}B} \left( \sqrt{1+\tfrac BA} - 1 \right).
	\end{equation}
\end{lemma}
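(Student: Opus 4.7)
The plan is to exploit the defining integral representation of the incomplete gamma function, swap the order of integration, and reduce the problem to two elementary one-variable integrals.

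First, I would write $\beta(Ay)=\Gamma(-1/2,4\pi Ay)=\int_{4\pi Ay}^\infty t^{-3/2}e^{-t}\,dt$ and substitute $t=4\pi Ay u$ to obtain
\[
\beta(Ay) = (4\pi Ay)^{-1/2} \int_1^\infty u^{-3/2} e^{-4\pi Ayu}\,du.
\]
Inserting this into the left-hand side of \eqref{eq:gamma-int} and exchanging the order of integration yields
\[
\int_0^\infty \beta(Ay) e^{-4\pi By}\,dy = \frac{1}{\sqrt{4\pi A}} \int_1^\infty u^{-3/2} \int_0^\infty y^{-1/2} e^{-4\pi(Au+B)y}\,dy\,du.
\]

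The inner integral in $y$ is a standard Gamma integral evaluating to $(2\sqrt{Au+B})^{-1}$, so up to constants it remains to compute $\int_1^\infty u^{-3/2}(Au+B)^{-1/2}\,du$. The substitution $v=1/u$ transforms this into $\int_0^1 (A+Bv)^{-1/2}\,dv = \tfrac{2}{B}(\sqrt{A+B}-\sqrt A)$, and reassembling the constant factors produces $\tfrac{1}{2\sqrt\pi\,B}(\sqrt{1+B/A}-1)$ as claimed.

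The one point requiring attention, as the excerpt itself flags, is the interchange of integration. Since the integrand $y^{-1/2}u^{-3/2}e^{-4\pi(Au+B)y}$ is nonnegative, Tonelli's theorem guarantees that the two iterated integrals agree; the explicit elementary evaluation above (which uses $A,B>0$ in an essential way to control convergence at both endpoints and both variables) confirms that the common value is finite, validating the manipulation a posteriori.
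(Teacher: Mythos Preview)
Your proof is correct and is precisely the ``routine'' computation the paper alludes to: write $\beta$ via its integral representation, interchange the order of integration, and evaluate the resulting elementary integrals. Your justification of the interchange via Tonelli (positivity of the integrand plus the a posteriori finiteness of the answer) is exactly the ``care'' the paper flags, and the arithmetic checks out.
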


Therefore \eqref{eq:B-integral} becomes
\begin{gather*}
	(-1)^N 4 \sum_{\substack{n^2-m^2=N \\ n,m\geq 1}} m \left( \sqrt{1+\tfrac{N}{m^2}} - 1 \right) + \delta_\square(N) (-1)^N 2\sqrt{N} \\ 
	= (-1)^N 2\left( 2\sum_{\substack{n^2-m^2=N \\ n,m\geq 1}} (n-m) + \delta_\square(N)\sqrt{N}\right).
\end{gather*}
It remains to show that this evaluates to $-b(N)$. If $N\equiv 2\pmod 4$, then the sum is empty and $\delta_\square(N)=0$. 
If $N$ is odd, then $n-m$ runs over all divisors of $N$ which are less than $\sqrt{N}$. In this case we have
\[
	2\sum_{\substack{n^2-m^2=N \\ n,m\geq 1}} (n-m) + \delta_\square(N)\sqrt{N} = \sum_{d|N} \min\left(d,\frac Nd\right).
\]
Finally, if $4|N$ then each  $n-m$ is even. 
Letting $r=\frac{n-m}{2}$ and $s=\frac{n+m}{2}$, we find that
\begin{equation*}
	\sum_{\substack{n^2-m^2=N \\ n,m\geq 1}} (n-m) 
		= \sum_{\substack{rs=N/4 \\ 0<r<s}} 2r 
		= \sum_{d|\frac N4} \min\left(d,\tfrac {N/4}d\right) - \delta_\square(N) \sqrt{\tfrac N4}.
\end{equation*}
\qed

\section{Proof of Theorem \ref{thm:m2spt}}

We proceed as in the proof of Theorem \ref{thm:spt1bar}. Write $H=H^+ + H^-$, where
\begin{align*}
	H^-(z) 
	= \frac{1}{2\pi i} \int_{-\bar{z}}^{i\infty} \frac{\eta^2(2\tau)/\eta(\tau)}{(-i(z+\tau))^{\frac32}}\,  d\tau.
\end{align*}
Since $\eta^2(2z)/\eta(z) = \displaystyle\sum_{\text{odd } n\geq 1} q^{n^2/8}$, we have
\begin{align*}
	H^-(z) = \frac{1}{4\sqrt{\pi}} \sum_{\text{odd }n\geq 1} n \beta\pfrac{n^2 y}{8} q^{-\frac{n^2}{8}}.
\end{align*}
Define $\hat{h}(z) := h(z) - \frac{1}{24}(E(z) - E_2^*(z))$.
Then \eqref{eq:def-H} gives
\[
	\hat{h}(z) = \frac{\eta^2(2z)}{\eta(z)}\sum_{n=1}^\infty (-1)^n \mspt(n) q^{n-\frac18} + \frac{1}{24}(E_2(z) - E_2(2z)) - \frac{1}{8\pi y} + \sum_N C(N,y) q^N,
\]
where
\[
	C(N,y) = \frac1{4\sqrt{\pi}}\sum_{\substack{n^2-m^2=8N \\ n,m\geq 1 \text{ odd}}} m \beta \pfrac{m^2 y}{8}.
\]
By an argument similar to that which gives \eqref{eq:bound-B}, we find that as $y\to \infty$ we have 
\[
	C(N,y) \ll 
	\begin{cases}
		|N|\, y^{-\frac32}e^{4\pi N y} & \text{ if  }\ N<0, \\
		y^{-\frac32} & \text{ if  }\ N= 0,\\
		N y^{-\frac32} & \text{ if  }\ N> 0.\\
	\end{cases}
\]
Thus we have $\hat{h}(z) \ll y^{-1}$ as $y\to \infty$. We have
\[
	\hat{h} \big|_2 W_2  = \frac 12 g + \frac{1}{24}(E(z) + 2E_2(2z)) - \frac{1}{8\pi y}.
\]
Therefore $\hat{h} \big|_2 W_2 \ll y^{-1/2}$ as $y\to \infty$ since the constant term of $g(z)$ is $-1/4$ and the constant term of $E(z)+2E_2(2z)$ is $3$. 
For $N>0$ we have the bound $C(N,y) \ll_N 1$ as $y\to 0$. 
Therefore, we may apply Lemma \ref{lem:gz} to conclude that $\pi_\hol(\hat{h})=0$.

Using \eqref{eq:hol-proj}, we find that
\begin{equation} \label{eq:pi-hol-h}
	0 = \pi_\hol(\hat{h}) = \frac{\eta^2(2z)}{\eta(z)}\sum_{n=1}^\infty (-1)^n \mspt(n) q^{n-\frac18} + \frac{1}{24}(E_2(z) - E_2(2z)) + \sum_{N=1}^\infty C(N) q^N,
\end{equation}
where
\[
	C(N) = \sqrt{\pi}N \int_0^\infty \sum_{\substack{n^2-m^2=8N \\ n,m\geq 1 \text{ odd}}} m \beta\pfrac{m^2 y}{8} e^{-4\pi N y} \, dy.
\]
By Lemma \ref{lem:gamma-integral} we obtain
\[
	C(N) = \frac 12 \sum_{\substack{n^2-m^2=8N \\ n,m\geq 1 \text{ odd}}} (n-m).
\]
Writing $u=\frac{n-m}{2}$ and $v=\frac{n+m}{2}$ gives
\[
	C(N) = \sum_{\substack{uv=2N \\ u<v \\ u+v \text{ odd}}} u = \frac 12 s(2N) - s(N/2).
\]
From \eqref{eq:pi-hol-h} we conclude that
\[
	\frac{\eta^2(2z)}{\eta(z)}\sum_{n=1}^\infty (-1)^n \mspt(n) q^{n-\frac18} = \sum_{n=1}^\infty c(n) q^n.
\]
\qed

\bibliographystyle{plain}
\bibliography{spt_recurrence_bib.bib}

\begin{thebibliography}{10}

\bibitem{Ahlgren:2011}
Scott Ahlgren, Kathrin Bringmann, and Jeremy Lovejoy.
\newblock $\ell$-adic properties of smallest parts functions.
\newblock {\em Advances in Mathematics}, 228(1):629 -- 645, 2011.

\bibitem{Ahlgren:2012}
Scott Ahlgren and Byungchan Kim.
\newblock Mock modular grids and {H}ecke relations for mock modular forms.
\newblock {\em Forum Math.}, DOI 10.1515/forum-2012-0011.

\bibitem{Andrews:2013}
George Andrews, Robert Rhoades, and Sanders Zwegers.
\newblock Modularity of the concave composition generating function.
\newblock {\em Algebra and Number Theory (to appear)}.

\bibitem{Andrews:2008}
George~E. Andrews.
\newblock The number of smallest parts in the partitions of {$n$}.
\newblock {\em J. Reine Angew. Math.}, 624:133--142, 2008.

\bibitem{Andrews:2012}
George~E. Andrews, Frank~G. Garvan, and Jie Liang.
\newblock Combinatorial interpretations of congruences for the spt-function.
\newblock {\em Ramanujan J.}, 29(1-3):321--338, 2012.

\bibitem{Bringmann:2008}
K.~Bringmann.
\newblock On the explicit construction of higher deformations of partition
  statistics.
\newblock {\em Duke Math.}, 144:195--233, 2008.

\bibitem{BLO:Rank}
Kathrin Bringmann, Jeremy Lovejoy, and Robert Osburn.
\newblock Rank and crank moments for overpartitions.
\newblock {\em J. Number Theory}, 129(7):1758--1772, 2009.

\bibitem{BLO:Automorphic}
Kathrin Bringmann, Jeremy Lovejoy, and Robert Osburn.
\newblock Automorphic properties of generating functions for generalized rank
  moments and {D}urfee symbols.
\newblock {\em Int. Math. Res. Not. IMRN}, (2):238--260, 2010.

\bibitem{Folsom:2008a}
Amanda Folsom and Ken Ono.
\newblock The {$spt$}-function of {A}ndrews.
\newblock {\em Proc. Natl. Acad. Sci. USA}, 105(51):20152--20156, 2008.

\bibitem{Garvan:2012}
F.~G. Garvan.
\newblock Congruences for {A}ndrews' spt-function modulo powers of {$5$}, {$7$}
  and {$13$}.
\newblock {\em Trans. Amer. Math. Soc.}, 364(9):4847--4873, 2012.

\bibitem{Imamoglu:2013}
{\"O}zlem~Imamo\u glu, Martin Raum, and Olav~K. Richter.
\newblock Holomorphic projections and {R}amanujan's mock theta functions.
\newblock http://arxiv.org/abs/1306.3919.

\bibitem{GZ:Heegner}
Benedict~H. Gross and Don~B. Zagier.
\newblock Heegner points and derivatives of {$L$}-series.
\newblock {\em Invent. Math.}, 84(2):225--320, 1986.

\bibitem{Ono:2009}
Ken Ono.
\newblock Unearthing the visions of a master: harmonic {M}aass forms and number
  theory.
\newblock In {\em Current developments in mathematics, 2008}, pages 347--454.
  Int. Press, Somerville, MA, 2009.

\bibitem{Sturm:1980}
Jacob Sturm.
\newblock Projections of {$C\sp{\infty }$} automorphic forms.
\newblock {\em Bull. Amer. Math. Soc. (N.S.)}, 2(3):435--439, 1980.

\bibitem{Zagier:2009}
Don Zagier.
\newblock Ramanujan's mock theta functions and their applications (after
  {Z}wegers and {O}no-{B}ringmann).
\newblock {\em Ast{\'e}risque}, (326):Exp. No. 986, vii--viii, 143--164 (2010),
  2009.
\newblock S{{\'e}}minaire Bourbaki. Vol. 2007/2008.

\end{thebibliography}

\end{document}